\newtheorem{proposition}{Proposition}[section]
\numberwithin{figure}{section}
\numberwithin{equation}{section}
\begin{document}
\title{Further remarks on the Luo-Hou's ansatz for a self-similar solution to the 3D Euler equations}
\author{Gianmarco Sperone}
\affil{Dipartimento di Matematica\\
       Politecnico di Milano\\
       Piazza Leonardo da Vinci 32 - 20133 Milan (Italy)\\
       E-mail: gianmarcosilvio.sperone@polimi.it}

\date{}       
\maketitle
\abstract{It is shown that the self-similar ansatz proposed by T. Hou and G. Luo to describe a singular solution of the 3D axisymmetric Euler equations leads, without assuming any asymptotic condition on the self-similar profiles, to an over-determined system of partial differential equations that produces two families of solutions: a class of trivial solutions in which the vorticity field is identically zero, and a family of solutions that blow-up immediately, where the vorticity field is governed by a stationary regime. In any case, the analytical properties of these solutions are not consistent with the numerical observations reported by T. Hou and G. Luo. Therefore, this result is a refinement of the previous work published by D. Chae and T.-P. Tsai on this matter, where the authors found the trivial class of solutions under a certain decay condition of the blow-up profiles.}

\section{Introduction}
The motion of a perfect, incompressible and homogeneous fluid through a region $\Omega \subseteq \mathbb{R}^{3}$, in the absence of external forcing, can be described by the celebrated Euler equations:
\begin{equation} \label{eulerinc}
\left\lbrace 
		\begin{aligned}
		 & \dfrac{\partial \textbf{u}}{\partial t} + \textbf{u} \cdot \nabla \textbf{u} = -     \nabla p
		 \\ & \nabla \cdot \textbf{u}=0
		\end{aligned}\ \ \ \ \text{in} \ \ \Omega \times (0, \infty),  
\right.	
\end{equation}
where $\textbf{u} : \Omega \times [0,\infty) \longrightarrow \mathbb{R}^{3}$ is the velocity field and $p : \Omega \times [0,\infty) \longrightarrow \mathbb{R}$ is the pressure field. One of the greatest unsolved problems in the mathematical theory of fluid mechanics is to determine whether regular solutions of the system \eqref{eulerinc} can develop finite-time singularities. An instructive list containing some of the numerical studies that support or refute this possibility may be found in \cite{gibbon}, where one could add the most recent numerical experiment designed by T. Hou and G. Luo (see \cite{luo}, or the resumed version \cite{luo_res}), in which they identified a class of potentially singular solutions to the 3D axisymmetric Euler equations in a radially bounded, axially periodic cylinder. Precisely, they found that the point of the potential singularity, which corresponds to the point of the maximum vorticity, is always located at the intersection of the solid boundary $r = 1$ and the symmetry plane $z = 0$ (in cylindrical coordinates), and the estimated singularity time is $T \approx 0$.$0035056$ [s]. Finally, a local analysis near the point of singularity also suggested the existence of a self-similar blow-up in the meridian plane.

\section{The Euler equations for 3D axisymmetric flows}
Let us consider a cylindrical coordinate system $(r,\theta,z) \in [0,\infty) \times [0,2\pi] \times \mathbb{R}$, in which any spatial point will be denoted by $\xi = r\widehat{r} + z\widehat{k}$. with $r \geq 0$, $z \in \mathbb{R}$ and $\left\lbrace \widehat{r}, \widehat{\theta}, \widehat{k}\right\rbrace \subset \mathbb{R}^{3}$ the orthonormal basis in this geometry. Given a fixed number $L > 0$, we will consider the following domain:
$$
\Sigma_{L} = \left\lbrace \xi \in \mathbb{R}^{3} \ \vert \ 0 < r < 1, \ 0 < z < L \right\rbrace.
$$ 
\par
By definition, an axisymmetric velocity field has the following representation in cylindrical coordinates:
$$
\textbf{u}(\xi,t) = u_{r}(r,z,t)\widehat{r} + u_{\theta}(r,z,t)\widehat{\theta} + u_{z}(r,z,t)\widehat{k},
$$ 
for all $(\xi,t) \in \overline{\Sigma_{L}} \times [0,\infty)$. The vorticity field (that is, the curl of the velocity field) admits a similar representation:
$$
\omega(\xi,t) = \omega_{r}(r,z,t)\widehat{r} + \omega_{\theta}(r,z,t)\widehat{\theta} + \omega_{z}(r,z,t)\widehat{k},
$$
for every $(\xi,t) \in \Sigma_{L} \times [0,\infty)$. Now, the incompressibility condition implies the existence of an axisymmetric stream function:
$$
\psi(\xi,t) = \psi_{r}(r,z,t)\widehat{r} + \psi_{\theta}(r,z,t)\widehat{\theta} + \psi_{z}(r,z,t)\widehat{k}, \ \ \forall (\xi,t) \in \overline{\Sigma_{L}} \times [0,\infty),
$$
that satisfies the following identities:
$$
\left\lbrace 
		\begin{aligned}
		 & \textbf{u}(\xi,t) = \text{rot}(\psi)(\xi,t)
		 \\ & \omega(\xi,t) = -\Delta\psi(\xi,t),
		\end{aligned}  
\right.	
$$
for all $(\xi,t) \in \Sigma_{L} \times [0,\infty)$. Then, in order to find an axisymmetric solution of \eqref{eulerinc}, the incompressible 3D Euler equations may be alternatively formulated in terms of the angular components $(u_{\theta},\omega_{\theta},\psi_{\theta})$ as follows (for a complete construction of this formulation, see \cite{majda}, chapter  N\textsuperscript{o}2):
\begin{equation} \label{euler_alternativa1}
\left\lbrace 
		\begin{aligned}
		 & \dfrac{\partial u_{\theta}}{\partial t} + u_{r}\dfrac{\partial u_{\theta}}{\partial r} + u_{z}\dfrac{\partial u_{\theta}}{\partial z} = -\dfrac{u_{r} u_{\theta}}{r}
		 \\ & \dfrac{\partial \omega_{\theta}}{\partial t} + u_{r}\dfrac{\partial \omega_{\theta}}{\partial r} + u_{z}\dfrac{\partial \omega_{\theta}}{\partial z} = \dfrac{2 u_{\theta}}{r}\dfrac{\partial u_{\theta}}{\partial z} + \dfrac{u_{r} \omega_{\theta}}{r}
		 \\ & - \left( \Delta - \dfrac{1}{r^{2}} \right) \psi_{\theta} = \omega_{\theta} 
		\end{aligned}  
\right. \ \ \ \ \text{in} \ \ \Sigma_{L} \times (0, \infty).
\end{equation}
\par
The original system \eqref{euler_alternativa1} presents an artifical singularity at $r = 0$, which is inconvenient to work with numerically. Therefore, the authors of \cite{luo} employed the following transformations:
$$
u_{1}(r,z,t) \doteq \dfrac{1}{r}u_{\theta}(r,z,t), \ \  \omega_{1}(r,z,t) \doteq \dfrac{1}{r}\omega_{\theta}(r,z,t), \ \ \psi_{1}(r,z,t) \doteq \dfrac{1}{r}\psi_{\theta}(r,z,t), 	
$$
for every $(\xi,t) \in \Sigma_{L} \times [0,\infty)$, which allow us to write the system \eqref{euler_alternativa1} in terms of $(u_{1},\omega_{1},\psi_{1})$ as:

\begin{equation} \label{euler_alternativa2}
\left\lbrace 
		\begin{aligned}
		 & \dfrac{\partial u_{1}}{\partial t} + u_{r}\dfrac{\partial u_{1}}{\partial r} + u_{z}\dfrac{\partial u_{1}}{\partial z} = 2u_{1}\dfrac{\partial \psi_{1}}{\partial z}
		 \\ & \dfrac{\partial \omega_{1}}{\partial t} + u_{r}\dfrac{\partial \omega_{1}}{\partial r} + u_{z}\dfrac{\partial \omega_{1}}{\partial z} = \dfrac{\partial}{\partial z}(u_{1}^{2})
		 \\ & -\left[ \left(\dfrac{\partial}{\partial r} \right)^{2} + \dfrac{3}{r} \dfrac{\partial}{\partial r} + \left(\dfrac{\partial}{\partial z} \right)^{2} \right] \psi_{1} = \omega_{1}
		\end{aligned}  
\right. \ \ \ \ \text{in} \ \ \Sigma_{L} \times (0, \infty).
\end{equation}
\par
The radial and axial components of the velocity field can be recovered as follows:
\begin{equation} \label{euler_alternativa3}
\left\lbrace 
		\begin{aligned}
		 & u_{r}(r,z,t) = -r\dfrac{\partial \psi_{1}}{\partial z}(r,z,t)
		 \\ & u_{z}(r,z,t) = 2\psi_{1}(r,z,t) + r\dfrac{\partial \psi_{1}}{\partial r}(r,z,t),
		\end{aligned}  
\right.	
\end{equation}
for all $(\xi,t) \in \Sigma_{L} \times [0,\infty)$, after which the incompressibility condition:
$$
\dfrac{1}{r}\dfrac{\partial}{\partial r}\left(r u_{r}\right) + \dfrac{\partial u_{z}}{\partial z} = 0, \ \  \text{in} \ \ \Sigma_{L} \times [0, \infty),
$$
is automatically satisfied.

\section{Analysis of Luo-Hou's self-similar ansatz}
\par
The authors of \cite{luo} numerically solved the system of partial differential equations \eqref{euler_alternativa2} on the cylinder $\Sigma_{L}$, under a suitable periodic condition along the axial direction and a no-flow boundary condition on the solid wall $r = 1$. Thus, according to the authors of \cite{chae4}, it is reasonable to consider the equations \eqref{euler_alternativa2} in the following space-time domain:
$$
\left\lbrace  (r,z) \in \mathbb{R}^{2} \ \vert \ 0 < r < 1, \  -\infty < z < \infty \right\rbrace \times [0,T),
$$
with $T > 0$ being a possible explosion time. 
\par
As mentioned earlier, T. Hou and G. Luo found rigorous evidence that suggested the formation of a ring-singularity on the solid boundary (due to the rotational symmetry of the flow), at a particular instant of time $T > 0$. In order to describe analytically a possible blow-up scenario at the space-time point $(r,z,t) = (1,0,T)$, T. Hou and G. Luo proposed the following local self-similar ansatz for the solutions of \eqref{euler_alternativa2}:
\begin{equation} \label{self_similar0}
\left\lbrace 
		\begin{aligned}
		 & u_{1}(r,z,t) = (T - t)^{\gamma_{u}} U\left( \dfrac{r - 1}{[T - t]^{\gamma_{l}}},\dfrac{z}{[T - t]^{\gamma_{l}}}\right) 
		 \\ & \omega_{1}(r,z,t) = (T - t)^{\gamma_{\omega}} \Omega\left( \dfrac{r - 1}{[T - t]^{\gamma_{l}}},\dfrac{z}{[T - t]^{\gamma_{l}}}\right) 
		 \\ & \psi_{1}(r,z,t) = (T - t)^{\gamma_{\psi}} \Psi\left( \dfrac{r - 1}{[T - t]^{\gamma_{l}}},\dfrac{z}{[T - t]^{\gamma_{l}}}\right), 
		\end{aligned}  
\right.	
\end{equation}
for every $r \approx 1$, $z \approx 0$ and $t \approx T$; in other words, the ansatz is valid on a neighborhood of the unit circle, for all time sufficiently close to the expected blow-up time. In this case, $(U, \Omega, \Psi)$ represent the self-similar profiles (which describe the development and evolution of the self-similar singularity) and $\gamma_{u}, \gamma_{\omega}, \gamma_{\psi}, \gamma_{l} \in \mathbb{R}$ are scaling exponents. As proved in \cite{luo} (section 4.7), upon substitution of \eqref{self_similar0} into \eqref{euler_alternativa2}, one can easily show that these scaling exponents are organized as a one-parameter family:

$$
\gamma_{u} = -1 + \dfrac{1}{2}\gamma, \ \ \ \gamma_{\omega} = -1, \ \ \ \gamma_{\psi} = -1 + 2\gamma,
$$
with $\gamma \doteq \gamma_{l} \approx 2$,$9133$. At this point, it is convenient to recall the following important result due to P. Constantin (see \cite{constantin2}): in view of the conservation of energy, a necessary condition for the existence of a finite-time blow-up is that $\gamma \geq 2 / 5$.
\par
For our analysis in this section, and imitating the methodology employed by D. Chae and T.-P. Tsai in \cite{chae4}, we will simply assume that $\gamma > 0$ and that the self-similar ansatz \eqref{self_similar0} is valid in the following space-time region:
\begin{equation} \label{self_similar1}
\mathcal{C}(\delta,T) \doteq \left\lbrace  (r,z,t) \in \mathbb{R}^{3} \ \vert \ 1 - \delta < r < 1, \ \ - \delta < z < \delta, \ \ T - \delta < t < T \right\rbrace,
\end{equation}
for some number $0 < \delta \ll 1$. By introducing the change of variables:
$$
R \doteq \dfrac{r - 1}{(T - t)^{\gamma}} \ \ , \ \ Z \doteq \dfrac{z}{(T - t)^{\gamma}},
$$
for all $(r,z,t) \in \mathcal{C}(\delta,T)$, we deduce that the self-similar profiles $(U, \Omega, \Psi)$ must be defined on the closure of the left-hand plane:
\begin{equation} \label{self_similar2}
\mathcal{D} \doteq \left\lbrace  (R,Z) \in \mathbb{R}^{2} \ \vert \ -\infty < R < 0, \ \ -\infty < Z < \infty \right\rbrace.
\end{equation}
The main result of this article is contained in the next proposition:
\begin{proposition} \label{result}
Let \textnormal{$(u_{1},\omega_{1},\psi_{1})$} be a classical solution of the system \eqref{euler_alternativa2} having the following representation:
\textnormal{
\begin{equation} \label{self_similar3}
\left\lbrace 
		\begin{aligned}
		 & u_{1}(r,z,t) = (T - t)^{-1 + \frac{\gamma}{2}} U(R,Z) 
		 \\ & \omega_{1}(r,z,t) = (T - t)^{-1} \Omega(R,Z) 
		 \\ & \psi_{1}(r,z,t) = (T - t)^{{-1 + 2\gamma}} \Psi(R,Z), 
		\end{aligned}  
\right.
\end{equation}
}for every \textnormal{$(r,z,t) \in \mathcal{C}(\delta,T)$}, and for some fixed parameter \textnormal{$\gamma > 0$}. Then, the trio \textnormal{$(u_{1},\omega_{1},\psi_{1})$} leads to an over-determined system of partial differential equations that produces two families of solutions:
\begin{itemize}[leftmargin=*]
\item[1.] A class of trivial solutions given by:
\textnormal{
\begin{equation} \label{solution1}
\left\lbrace 
		\begin{aligned}
		 & u_{1}(r,z,t) = 0
		 \\ & \omega_{1}(r,z,t) = 0
		 \\ & \psi_{1}(r,z,t) = b(T - t)^{-1 + \gamma}z + c(T - t)^{-1 + 2\gamma}, 
		\end{aligned}  
\right.	
\end{equation}
}
for all $(r,z,t) \in  \mathcal{C}(\delta,T)$, with $ b, c \in \mathbb{R}$.

\item[2.] A class of solutions that blow-up instantaneously, corresponding to the expression:
\textnormal{
\begin{equation} \label{solution2}
\left\lbrace 
		\begin{aligned}
		 & u_{1}(r,z,t) = \kappa(1 - r)^{\frac{\gamma - 2}{2\gamma}}
		 \\ & \omega_{1}(r,z,t) = 0
		 \\ & \psi_{1}(r,z,t) = c(T - t)^{-1 + 2\gamma}, 
		\end{aligned}  
\right.	
\end{equation}
}
for all $(r,z,t) \in  \mathcal{C}(\delta,T)$, with $\kappa, c \in \mathbb{R}$.
\end{itemize}
\end{proposition}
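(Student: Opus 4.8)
The plan is to insert the representation \eqref{self_similar3} into each of the three equations of \eqref{euler_alternativa2} and to read off the constraints on the profiles $(U,\Omega,\Psi)$. Writing $\tau \doteq T - t$, the change of variables gives $\partial_r = \tau^{-\gamma}\partial_R$ and $\partial_z = \tau^{-\gamma}\partial_Z$, while for any quantity of the form $\tau^{\alpha}F(R,Z)$ the time derivative is $\partial_t(\tau^{\alpha}F) = \tau^{\alpha-1}[-\alpha F + \gamma(R\,\partial_R F + Z\,\partial_Z F)]$; together with $u_r = -r\,\tau^{-1+\gamma}\Psi_Z$ and $u_z = 2\tau^{-1+2\gamma}\Psi + r\,\tau^{-1+\gamma}\Psi_R$ from \eqref{euler_alternativa3}. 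The crucial structural observation — and the source of the over-determination — is that the geometric coefficient $r = 1 + R\tau^{\gamma}$ (and likewise $1/r$) is \emph{not} invariant under the self-similar scaling. Consequently, after factoring out the common power of $\tau$, each equation becomes an identity of the form $\sum_{k\ge 0} c_k(R,Z)\,\tau^{k\gamma} = 0$ valid on a whole interval of $\tau$, for each fixed $(R,Z)\in\mathcal{D}$ (as $(r,z,t)$ varies within $\mathcal{C}(\delta,T)$). Since $\gamma>0$, the powers $\tau^{k\gamma}$ are linearly independent, so every coefficient $c_k$ must vanish separately, and thus each single PDE splits into several equations.

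I would begin with the elliptic equation, multiplying it by $-\tau(1+R\tau^{\gamma})$ so that it remains polynomial in $\tau^{\gamma}$. The coefficient of $\tau^{0}$ yields $\Psi_{RR}+\Psi_{ZZ} = -\Omega$, while the coefficient of $\tau^{\gamma}$ collapses (using the $\tau^{0}$ relation) to $\Psi_R = 0$. Hence $\Psi=\Psi(Z)$ depends on $Z$ alone, and then $\Omega = -\Psi''(Z)$ depends only on $Z$ as well. Feeding $\Psi_R=\Omega_R=0$ into the vorticity equation, its $\tau^{\gamma}$-coefficient reduces to the pointwise relation $\Psi\,\Omega' = 0$. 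Here I expect the first genuinely delicate point: this product-is-zero relation must be upgraded to $\Omega \equiv \mathrm{const}$. For a classical (smooth) solution this follows from a connectedness argument — on the open set where $\Psi''' = -\Omega' \neq 0$ one would be forced to have $\Psi \equiv 0$, hence $\Psi''\equiv 0$ and $\Psi'''\equiv 0$, a contradiction — so $\Psi''' \equiv 0$ everywhere, i.e. $\Psi$ is a quadratic polynomial in $Z$ and $\Omega$ is constant.

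Next I would turn to the $\tau^{0}$-coefficient of the vorticity equation, which with $\Omega$ constant reads $\partial_Z(U^{2}) = \Omega$, and integrate it to $U^{2} = -2aZ + h(R)$ where $\Omega = -2a$. The second delicate point — and the place where the decay hypothesis used by Chae and Tsai is replaced by an exact argument — is that $U$ must be a real-valued profile on all of $\mathcal{D}$, where $Z$ ranges over $\mathbb{R}$; since a nonconstant affine function of $Z$ cannot remain nonnegative, this forces $a=0$. Therefore $\Omega\equiv 0$, the function $\Psi = bZ+c$ is affine, and $U=U(R)$ is independent of $Z$.

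Finally I would exploit the two equations arising from the $u_1$-equation. With $\Psi'=b$ and $U_Z=0$, its $\tau^{0}$-part becomes the linear ODE $(1-\tfrac{\gamma}{2})U + (\gamma R - b)\,U_R = 0$, and its $\tau^{\gamma}$-part becomes $b\,(R\,U_R+2U)=0$. If $b=0$, the ODE integrates to $U=\kappa(-R)^{(\gamma-2)/(2\gamma)}$, which translates back through $-R=(1-r)\tau^{-\gamma}$ into the instantaneously blowing-up family \eqref{solution2}; if $b\neq 0$, the constraint $R\,U_R+2U=0$ is incompatible with the ODE unless $U\equiv 0$, producing the trivial family \eqref{solution1}. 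Reverting the change of variables in $\psi_1 = \tau^{-1+2\gamma}(bZ+c)$ then gives the stated form $b\,\tau^{-1+\gamma}z + c\,\tau^{-1+2\gamma}$, completing the classification. The main obstacle throughout is the careful bookkeeping of the $\tau$-expansions and, above all, the two rigidity steps ($\Psi\Omega'=0 \Rightarrow \Omega$ constant, and $U^{2}$ affine in $Z \Rightarrow a=0$) that allow the argument to dispense with any asymptotic assumption on the profiles.
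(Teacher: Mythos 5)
Your proposal is correct, and its skeleton coincides with the paper's own proof: the six ``dominant'' equations you obtain for $(U,\Omega,\Psi)$ are exactly \eqref{dom215}--\eqref{dom220} (the paper extracts them by multiplying by suitable powers of $T-t$ and letting $t \nearrow T$, then re-substituting the first group into the full equations; your formulation via linear independence of the powers $\tau^{k\gamma}$ on an interval is an equivalent packaging), your connectedness argument upgrading $\Psi\,\Omega' = 0$ to $\Omega \equiv \mathrm{const}$ is the paper's argument, and your final case analysis on $b$ (the constraint $(\gamma^{*}R - b)U' = 0$ with $\gamma^{*} = \frac{5\gamma-2}{4}$ forcing $U \equiv 0$ when $b \neq 0$, and the explicit integration $U = \kappa(-R)^{\frac{\gamma-2}{2\gamma}}$ when $b = 0$) is also identical. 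The one genuine departure is the step that eliminates a nonzero constant vorticity profile. The paper assumes $\Omega \equiv a \neq 0$, multiplies \eqref{dom215} by $2U$, divides the resulting identity \eqref{dom228} by $Z$ and sends $Z \to +\infty$ to get $f'(R) = -2$, and then repeats the trick on \eqref{dom218} (divide by $R$, send $R \to -\infty$) to force $\Psi' \equiv 0$ and reach a contradiction; this occupies the block \eqref{dom226}--\eqref{dom232}. You instead observe that $U^{2} = \Omega Z + h(R)$ must be nonnegative on all of $\mathcal{D}$, where $Z$ ranges over the whole real line, so it cannot be a nonconstant affine function of $Z$; hence $\Omega = 0$ at once. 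Both arguments exploit only the unboundedness of $\mathcal{D}$ rather than any decay hypothesis (which is precisely the paper's improvement over Chae--Tsai), but yours is shorter and more elementary: a single sign consideration replaces two limiting procedures, and the paper's intermediate formula $U^{2} = -2R + aZ + \kappa$ becomes unnecessary.
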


\begin{proof}
We begin by writing the radial and axial components of the velocity field in terms of the self-similar profiles $(U, \Omega, \Psi)$, according to the expressions \eqref{euler_alternativa3}:

\begin{equation} \label{calc1}
\left\lbrace 
		\begin{aligned}
        & u_{r}(r,z,t) = -[1 + R(T-t)^{\gamma}](T-t)^{-1 + \gamma}\dfrac{\partial \Psi}{\partial Z}(R,Z),
        \\ & u_{z}(r,z,t) = 2(T-t)^{-1 + 2\gamma}\Psi(R,Z) + [1 + R(T-t)^{\gamma}](T-t)^{-1 + \gamma}\dfrac{\partial \Psi}{\partial R}(R,Z),
       \end{aligned}  
\right.	
\end{equation}
for every $(r,z,t) \in  \mathcal{C}(\delta,T)$. Let us now introduce the following notation for the differential operators and vectors that are going to be employed:
$$
\nabla = \left( \dfrac{\partial}{\partial R}, \dfrac{\partial}{\partial Z}\right) , \ \ \ \nabla^{\perp} = \left( -\dfrac{\partial}{\partial Z}, \dfrac{\partial}{\partial R}\right) , \ \ \ \Delta = \dfrac{\partial^{2}}{\partial R^{2}} + \dfrac{\partial^{2}}{\partial Z^{2}}, \ \ \ Y = (R,Z).
$$
$\bullet$ By substituting \eqref{self_similar3} and \eqref{calc1} into the first equation of \eqref{euler_alternativa2}, we obtain:
\begin{equation} \label{calc2}
\begin{aligned}
& \left(1 - \dfrac{\gamma}{2} \right) (T-t)^{-2 + \frac{\gamma}{2}}U + \gamma(T-t)^{-2 + \frac{\gamma}{2}}\left(R\dfrac{\partial U}{\partial R} + Z \dfrac{\partial U}{\partial Z}\right) \\ & - [1 + R(T-t)^{\gamma}](T-t)^{-2 + \frac{\gamma}{2}}\dfrac{\partial \Psi}{\partial Z} \dfrac{\partial U}{\partial R}
\\ & + \left\lbrace 2(T-t)^{-1 + 2\gamma}\Psi + [1 + R(T-t)^{\gamma}](T-t)^{-1 + \gamma}\dfrac{\partial \Psi}{\partial R} \right\rbrace (T-t)^{-1 - \frac{\gamma}{2}}\dfrac{\partial U}{\partial Z}
\\ & = 2(T-t)^{-2 + \frac{3}{2}\gamma}U\dfrac{\partial \Psi}{\partial Z},
\end{aligned}  
\end{equation}
for all $(R,Z) \in \mathcal{D}$ and $t \in [T - \delta,T)$. Multiplying \eqref{calc2} by $(T-t)^{2 - \frac{\gamma}{2}}$, and then taking the limit as $t \nearrow T$ in that expression, we get (since $\gamma > 0$):
$$
\left(1 - \dfrac{\gamma}{2} \right)U + \gamma Y \cdot \nabla U + \nabla^{\perp} \Psi \cdot \nabla U = 0,
$$
for every $(R,Z) \in \mathcal{D}$.\\
\\
\noindent
$\bullet$ By substituting \eqref{self_similar3} and \eqref{calc1} into the second equation of \eqref{euler_alternativa2}, we obtain:
\begin{equation} \label{calc3}
\begin{aligned}
& (T - t)^{-2}\Omega + \gamma(T-t)^{-2}\left(R\dfrac{\partial \Omega}{\partial R} + Z\dfrac{\partial \Omega}{\partial Z}\right) - [1 + R(T-t)^{\gamma}](T-t)^{-2}\dfrac{\partial \Psi}{\partial Z}\dfrac{\partial \Omega}{\partial R}
\\ & + \left\lbrace 2(T-t)^{-1 + 2\gamma}\Psi + [1 + R(T-t)^{\gamma}](T-t)^{-1 + \gamma}\dfrac{\partial \Psi}{\partial R} \right\rbrace (T-t)^{-1 - \gamma}\dfrac{\partial \Omega}{\partial Z}
\\ & = (T-t)^{-2}\dfrac{\partial}{\partial Z}(U^{2}),
\end{aligned}  
\end{equation}
for all $(R,Z) \in \mathcal{D}$ and $t \in [T - \delta,T)$. Multiplying \eqref{calc3} by $(T-t)^{2}$, and then taking the limit as $t \nearrow T$ in that expression, we get: 
$$
\Omega + \gamma Y \cdot \nabla \Omega + \nabla^{\perp} \Psi \cdot \nabla \Omega = \dfrac{\partial}{\partial Z}(U^{2}),
$$
for every $(R,Z) \in \mathcal{D}$.\\
\\
\noindent
$\bullet$ By substituting \eqref{self_similar3} into the third equation of \eqref{euler_alternativa2}, we obtain:
\begin{equation} \label{calc4}
-(T-t)^{-1}\left( \dfrac{\partial^{2} \Psi}{\partial R^{2}} + \dfrac{\partial^{2} \Psi}{\partial Z^{2}}\right)  - \dfrac{3(T-t)^{-1 + \gamma}}{1 + R(T-t)^{\gamma}}\dfrac{\partial \Psi}{\partial R} = (T-t)^{-1}\Omega,
\end{equation}
for all $(R,Z) \in \mathcal{D}$ and $t \in [T - \delta,T)$. Multiplying \eqref{calc4} by $(T-t)$, and then taking the limit as $t \nearrow T$ in that expression, we get:
$$
- \Delta \Psi = \Omega,
$$
for every $(R,Z) \in \mathcal{D}$.
\par
Therefore, the first group of dominant equations for the self-similar profiles $(U, \Omega, \Psi)$ is given by:
\begin{equation} \label{dom215}
\left(1 - \dfrac{\gamma}{2} \right)U + \gamma Y \cdot \nabla U + \nabla^{\perp} \Psi \cdot \nabla U = 0
\end{equation}
\begin{equation} \label{dom216}
\Omega + \gamma Y \cdot \nabla \Omega + \nabla^{\perp} \Psi \cdot \nabla \Omega = \dfrac{\partial}{\partial Z}(U^{2})
\end{equation}
\begin{equation} \label{dom217}
- \Delta \Psi = \Omega,
\end{equation}
for every $(R,Z) \in \mathcal{D}$. As pointed out by D. Chae and T.-P. Tsai in \cite{chae4}, the interesting fact is that we can actually use identities \eqref{dom215} - \eqref{dom216} - \eqref{dom217} to obtain another group of dominant equations for the blow-up profiles $(U, \Omega, \Psi)$. The derivation goes as follows:\\
\\
\noindent
$\bullet$ Multiplying \eqref{calc2} by $(T-t)^{2 - \frac{3}{2}\gamma}$ and reordering terms, we get:
$$
(T - t)^{-\gamma} \left\lbrace  \left(1 - \dfrac{\gamma}{2} \right) U + \gamma (Y \cdot \nabla U) + \nabla^{\perp} \Psi \cdot \nabla U \right\rbrace  + 2\Psi \dfrac{\partial U}{\partial Z} + R(\nabla^{\perp} \Psi \cdot \nabla U) = 2U\dfrac{\partial \Psi}{\partial Z}
$$
$$
\Leftrightarrow \ \ R(\nabla^{\perp} \Psi \cdot \nabla U) + 2\Psi \dfrac{\partial U}{\partial Z} = 2U\dfrac{\partial \Psi}{\partial Z},
$$
for all $(R,Z) \in \mathcal{D}$, thanks to identity \eqref{dom215}.\\
\\
\noindent
$\bullet$ Multiplying \eqref{calc3} by $(T-t)^{2 - \gamma}$ and reordering terms, we get:
$$
(T - t)^{-\gamma} \left\lbrace \Omega + \gamma (Y \cdot \nabla \Omega) + \nabla^{\perp} \Psi \cdot \nabla \Omega - \dfrac{\partial}{\partial Z}(U^{2}) \right\rbrace + 2\Psi \dfrac{\partial \Omega}{\partial Z} + R(\nabla^{\perp} \Psi \cdot \nabla \Omega) = 0
$$
$$
\Leftrightarrow \ \ R(\nabla^{\perp} \Psi \cdot \nabla \Omega) + 2\Psi \dfrac{\partial \Omega}{\partial Z} = 0,
$$
for all $(R,Z) \in \mathcal{D}$, thanks to identity \eqref{dom216}.\\
\\
\noindent
$\bullet$ Multiplying \eqref{calc4} by $(T-t)^{1 - \gamma}$ and reordering terms, we get:
$$
- \dfrac{3}{1 + R(T-t)^{\gamma}}\dfrac{\partial \Psi}{\partial R} = (T-t)^{-\gamma}(\Delta \Psi + \Omega)
$$
$$
\Leftrightarrow \ \ \dfrac{\partial \Psi}{\partial R} = 0,
$$
for all $(R,Z) \in \mathcal{D}$, thanks to identity \eqref{dom217}.
\par
Thus, the second group of dominant equations for the self-similar profiles $(U, \Omega, \Psi)$ is:
\begin{equation} \label{dom218}
R(\nabla^{\perp} \Psi \cdot \nabla U) + 2\Psi \dfrac{\partial U}{\partial Z} = 2U\dfrac{\partial \Psi}{\partial Z}
\end{equation}
\begin{equation} \label{dom219}
R(\nabla^{\perp} \Psi \cdot \nabla \Omega) + 2\Psi \dfrac{\partial \Omega}{\partial Z} = 0
\end{equation}
\begin{equation} \label{dom220}
\dfrac{\partial \Psi}{\partial R} = 0,
\end{equation}
for all $(R,Z) \in \mathcal{D}$, so identities \eqref{dom215} to \eqref{dom220} constitute an over-determined system of partial differential equations for the blow-up profiles $(U, \Omega, \Psi)$.
\par
From equality \eqref{dom220} we immediately deduce that the profile $\Psi$ only depends on the variable $Z \in \mathbb{R}$. According to \eqref{dom217}, the same applies for profile $\Omega$, together with the identity:
\begin{equation} \label{dom221}
-\Psi ''(Z) = \Omega(Z), \ \forall Z \in \mathbb{R}.
\end{equation}
With this information, equation \eqref{dom219} becomes:
\begin{equation} \label{dom222}
\Psi(Z) \Omega'(Z) = 0, \ \forall Z \in \mathbb{R}.
\end{equation}
Now, suppose that $\Omega \in \mathcal{C}^{1}(\mathbb{R}; \mathbb{R})$ and that its derivative $\Omega'$ is not identically zero on $\mathbb{R}$. Then, without loss of generality, we may choose $Z_{0} \in \mathbb{R}$ such that $\Omega'(Z_{0}) > 0$. By continuity, there exists $\eta > 0$ such that $\Omega'(Z) > 0$, for all $Z \in (Z_{0} - \eta, Z_{0} + \eta)$. According to \eqref{dom222}, we obtain $\Psi(Z) = 0$, $\forall Z \in B(Z_{0},\eta)$, and this also implies that $\Omega(Z) =0$, for every $Z \in (Z_{0} - \eta, Z_{0} + \eta)$, thanks to \eqref{dom221}. In particular, we have $\Omega'(Z)=0$, $\forall Z \in (Z_{0} - \eta, Z_{0} + \eta)$, contradicting the initial hypothesis. Thus, $\Omega'$ must be identically zero on $\mathbb{R}$, or well, there exists $a \in \mathbb{R}$ such that $\Omega(Z) = a$, $\forall Z \in \mathbb{R}$. But if the profile $\Omega$ is constant on $\mathbb{R}$, equation \eqref{dom221} allows us to conclude that $\Psi$ must be a quadratic function on $\mathbb{R}$:
\begin{equation} \label{dom223}
\Psi(Z) = -\dfrac{a}{2}Z^{2} + bZ + c, \ \forall Z \in \mathbb{R},
\end{equation}
for some constants $b, c \in \mathbb{R}$.
\par
Let us now suppose that $a \neq 0$. With all the previous information, identity \eqref{dom216} reduces to:
$$
\dfrac{\partial}{\partial Z}(U^{2}) = a, \ \ \forall (R,Z) \in \mathcal{D}.
$$
Therefore, there is a differentiable function $f : (-\infty, 0] \longrightarrow \mathbb{R}$ such that:
$$
U(R,Z)^{2} = f(R) + aZ, \ \ \forall (R,Z) \in \mathcal{D}.
$$
Differentiating with respect to $R<0$ and $Z \in \mathbb{R}$, we get:
\begin{equation} \label{dom226}
\left\lbrace 
		\begin{aligned}
        & 2U(R,Z)\dfrac{\partial U}{\partial R}(R,Z) = f'(R)
        \\ & 2U(R,Z)\dfrac{\partial U}{\partial Z}(R,Z) = a,
       \end{aligned}  
\right.	
\end{equation}
for every $(R,Z) \in \mathcal{D}$. On the other hand, equation \eqref{dom215} is simplified to:
$$
\left(1 - \dfrac{\gamma}{2} \right)U(R,Z) + \gamma R \dfrac{\partial U}{\partial R}(R,Z) + \gamma Z \dfrac{\partial U}{\partial Z}(R,Z) - \Psi'(Z)\dfrac{\partial U}{\partial R}(R,Z) = 0, 
$$
for all $(R,Z) \in \mathcal{D}$. Multiplying this identity by $2U(R,Z)$, with $(R,Z) \in \mathcal{D}$, and then using \eqref{dom226}, we deduce that:
\begin{equation} \label{dom228}
(2-\gamma)[f(R) + aZ] + [\gamma R  + aZ - b]f'(R) + a \gamma Z = 0, \ \ \forall (R,Z) \in \mathcal{D}.
\end{equation}
Dividing both sides of equation \eqref{dom228} by $Z > 0$, and then taking the limit as $Z \longrightarrow +\infty$, we may conclude that:
$$
(2-\gamma)a + af'(R) + a\gamma = 0, \ \ \forall (R,Z) \in \mathcal{D},
$$
or equivalently, $f'(R) = -2$, $\forall R < 0$, since $a \neq 0$. As a consequence, there exists a constant $\kappa \in \mathbb{R}$ such that:

\begin{equation} \label{dom230}
U(R,Z)^{2} = -2R + aZ + \kappa, \ \ \forall (R,Z) \in \mathcal{D}.
\end{equation}
With all this new information, identity \eqref{dom218} becomes:
\begin{equation} \label{dom231}
-R \Psi'(Z) \dfrac{\partial U}{\partial R}(R,Z) + 2 \Psi(Z) \dfrac{\partial U}{\partial Z}(R,Z) = 2 \Psi'(Z) U(R,Z), \ \ \forall (R,Z) \in \mathcal{D}.
\end{equation}
Like before, multiplying this equation by $2U(R,Z)$, with $(R,Z) \in \mathcal{D}$, and then using \eqref{dom226} and \eqref{dom230}, we get the following:
\begin{equation} \label{dom232}
2R\Psi'(Z) + 2a\Psi(Z) = 4 \Psi'(Z)(-2R + aZ + \kappa), \ \ \forall (R,Z) \in \mathcal{D}.
\end{equation}
Dividing both sides of equation \eqref{dom232} by $R < 0$, and then taking the limits as $R \longrightarrow -\infty$, one concludes that $\Psi'(Z) = 0$, for every $Z \in \mathbb{R}$. Given the quadratic form of profile $\Psi$, this necessarily implies that $a = b = 0$, contradicting the initial hypothesis.
\par
As a consequence, we must have $a = 0$, and so:
\begin{equation} \label{dom233}
\left\lbrace 
		\begin{aligned}
        & \Omega(Z) = 0
        \\ & \Psi(Z) = bZ + c,
       \end{aligned}  
\right.	
\end{equation}
for all $Z \in \mathbb{R}$. We still haven't employed the information contained in equations  \eqref{dom215}, \eqref{dom216} and \eqref{dom218}. First of all, \eqref{dom216} simplifies to:
$$
\dfrac{\partial}{\partial Z}(U^{2}) = 0, \ \ \forall (R,Z) \in \mathcal{D}.
$$
This indicates that the function $U^{2}$ does not depend on $Z$, and accordingly, profile $U$ will only depend on the variable $R \leq 0$. Identities \eqref{dom215} and \eqref{dom218} then become:
\begin{equation} \label{dom235}
\left\lbrace 
		\begin{aligned}
        & \left(1 - \dfrac{\gamma}{2} \right)U(R) + \gamma R U'(R) - \Psi'(Z)U'(R)  = 0
        \\ & -R\Psi'(Z)U'(R) = 2U(R)\Psi'(Z),
       \end{aligned}  
\right.	
\end{equation}
for every $R < 0$ and $Z \in \mathbb{R}$. Upon substitution of \eqref{dom233} into \eqref{dom235}, we obtain the following ordinary differential equations for profile $U$:
\begin{equation} \label{dom236}
\left\lbrace 
		\begin{aligned}
        & \left(1 - \dfrac{\gamma}{2} \right)U(R) + (\gamma R - b) U'(R) = 0
        \\ & -bRU'(R) = 2bU(R),
       \end{aligned}  
\right.	
\end{equation}
for all $R < 0$. Our analysis must now distinguish the two subsequent cases:\\
\\
$\bullet$ \textbf{Case (A):} if $b \neq 0$, from the second equality of \eqref{dom236} we may deduce that:
$$
U(R) = -\dfrac{R}{2} U'(R), \ \ \forall R < 0.
$$  
By replacing this into the first equation of \eqref{dom236}, we get:
$$
(\gamma^{*}R - b)U'(R)=0, \ \ \forall R < 0,
$$
with $\gamma^{*} \doteq \dfrac{5\gamma - 2}{4}$. That is, $U'(R) = 0$, for every $R < 0$ such that $R \neq \dfrac{b}{\gamma^{*}}$ (if $\gamma = 2/5$, we directly conclude that $U'(R) = 0$, $\forall R < 0$). If we assume, let's say, that $U \in \mathcal{C}^{1}((-\infty,0]; \mathbb{R})$, we immediately deduce that $U'(R) = 0$, $\forall R < 0$. By continuity, we will also have $U'(R) = 0$, $\forall R \leq 0$. Therefore, in case (A) we have obtained the following family of exact solutions for the trio $(U, \Omega, \Psi)$:
$$
\left\lbrace 
		\begin{aligned}
		 & U(R,Z) = 0
		 \\ & \Omega(R,Z) = 0
		 \\ & \Psi(R,Z) = bZ + c, 
		\end{aligned}  
\right.	
$$
for every $(R,Z) \in \mathcal{D}$, with $b, c \in \mathbb{R}$. In terms of the original functions, the new-found solution is:
\begin{equation} \label{solutionA}
\left\lbrace 
		\begin{aligned}
		 & u_{1}(r,z,t) = 0
		 \\ & \omega_{1}(r,z,t) = 0
		 \\ & \psi_{1}(r,z,t) = b(T - t)^{-1 + \gamma}z + c(T - t)^{-1 + 2\gamma}, 
		\end{aligned}  
\right.	
\end{equation}
for all $(r,z,t) \in  \mathcal{C}(\delta,T)$, with $ b, c \in \mathbb{R}$.\\
\\
\noindent
$\bullet$ \textbf{Case (B):} if $b = 0$, we get a unique ordinary differential equation for profile $U$:
$$
\left(1 - \dfrac{\gamma}{2} \right)U(R) + \gamma R U'(R)=0, \ \ \forall R < 0,
$$
which has a general solution given by the following formula:
$$
U(R) = \kappa \text{exp}\left(- \int\dfrac{1}{\gamma R}\left\lbrace 1 -\dfrac{\gamma}{2} \right\rbrace dR \right) , \ \ \forall R \leq 0,
$$
with $\kappa \in \mathbb{R}$ being a parameter. After a simple manipulation, we get $U(R) = \kappa (-R)^{\frac{1}{2} - \frac{1}{\gamma}}$, for $R \leq 0$ and $\kappa \in \mathbb{R}$. Therefore, in case (B) we have obtained the following family of exact solutions for the trio $(U, \Omega, \Psi)$:
$$
\left\lbrace 
		\begin{aligned}
		 & U(R,Z) = \kappa (-R)^{\frac{1}{2} - \frac{1}{\gamma}}
		 \\ & \Omega(R,Z) = 0
		 \\ & \Psi(R,Z) = c, 
		\end{aligned}  
\right.	
$$
for every $(R,Z) \in \mathcal{D}$, with $\kappa, c \in \mathbb{R}$. In terms of the original functions, the new-found solution is:
\begin{equation} \label{solutionB}
\left\lbrace 
		\begin{aligned}
		 & u_{1}(r,z,t) = \kappa(1 - r)^{\frac{\gamma - 2}{2\gamma}}
		 \\ & \omega_{1}(r,z,t) = 0
		 \\ & \psi_{1}(r,z,t) = c(T - t)^{-1 + 2\gamma}, 
		\end{aligned}  
\right.	
\end{equation}
for all $(r,z,t) \in  \mathcal{C}(\delta,T)$, with $\kappa, c \in \mathbb{R}$.
\end{proof}
We will conclude this section by detailing some of the analytical properties of solutions \eqref{solution1} and \eqref{solution2} that are not consistent with the numerical observations reported by T. Hou and G. Luo in \cite{luo}. 
\par
Recall that the famous Beale - Kato - Majda criterion (BKM criterion; see \cite{beale}, or the generalized version in \cite{ferrari}) states that a smooth solution of the 3D Euler equations blows up at time $T > 0$ if and only if:
\begin{equation} \label{bkm}
\int_{0}^{T} \Vert \omega(\cdot,t) \Vert_{L^{\infty}(\Phi; \mathbb{R}^{3})} \,dt = \infty,
\end{equation}
where $\Phi \subset \mathbb{R}^{3}$ is the spatial domain (that may be bounded or unbounded) in which the 3D Euler equations are being solved. In the case of the solutions associated with the Luo-Hou's self-similar ansatz we have $\Phi = \left\lbrace  (r,z) \in \mathbb{R}^{2} \ \vert \ 0 < r < 1, \  -\infty < z < \infty \right\rbrace$, even though we shall focus our attention in the self-similar region \eqref{self_similar1}, whose spatial section will be denoted as $\Lambda \doteq \left\lbrace  (r,z) \in \mathbb{R}^{2} \ \vert \ 1 - \delta < r < 1, \  -\delta < z < \delta \right\rbrace$. 
\par
The BKM criterion is the basic mathematical tool employed by T. Hou and G. Luo in order to assess the likelihood of a finite-time singularity. Indeed, in \cite{luo} (section $4$.$4$) they describe the numerical procedure used to prove that the simulated solution actually satisfies equality \eqref{bkm}. We will now use this criterion to explain why formulae \eqref{solution1} - \eqref{solution2} do not represent the self-similar regime of smooth solutions of the 3D axisymmetric Euler equations that develop a finite-time singularity.\\
\par
$\bullet$ \textbf{Case (A):} in cylindrical coordinates, the velocity and vorticity fields associated with the class of solutions \eqref{solution1} take the form:
\begin{equation} \label{solution a1}
\left\lbrace 
		\begin{aligned}
		 & \textbf{u}(\xi,t) = -b(T - t)^{-1 + \gamma}r\hat{r} + 2(T-t)^{-1 + \gamma}[bz + c(T-t)^{\gamma}]\hat{k}
		 \\ & \omega(\xi,t) = \vec{0}, 
		\end{aligned}  
\right.	
\end{equation}
for every $(\xi,t) \in \mathcal{C}(\delta,T)$, with $b, c \in \mathbb{R}$. Recall that the self-similar ansatz was proposed in order to describe a blow-up scenario at the space-time point $(r,z,t) = (1,0,T)$. Thus, the solution of problem \eqref{euler_alternativa1} must be smooth outside the region $\mathcal{C}(\delta,T)$, which enables us to deduce:
$$
\int_{0}^{T - \delta} \Vert \omega(\cdot,t) \Vert_{L^{\infty}(\Phi; \mathbb{R}^{3})} \,dt < \infty, \ \ \int_{T - \delta}^{T} \Vert \omega(\cdot,t) \Vert_{L^{\infty}(\Phi \setminus \Lambda; \mathbb{R}^{3})} \,dt < \infty.
$$
Since the vorticity field is identically null on $\Lambda \times (T - \delta, T)$, we have:
$$ 
\Vert \omega(\cdot,t) \Vert_{L^{\infty}(\Phi; \mathbb{R}^{3})} = \Vert \omega(\cdot,t) \Vert_{L^{\infty}(\Phi \setminus \Lambda; \mathbb{R}^{3})}, \ \ \forall t \in (T - \delta, T),
$$
and then:
$$ 
\int_{0}^{T} \Vert \omega(\cdot,t) \Vert_{L^{\infty}(\Phi; \mathbb{R}^{3})} \,dt = \int_{0}^{T-\delta} \Vert \omega(\cdot,t) \Vert_{L^{\infty}(\Phi; \mathbb{R}^{3})} \,dt + \int_{T-\delta}^{T} \Vert \omega(\cdot,t) \Vert_{L^{\infty}(\Phi \setminus \Lambda; \mathbb{R}^{3})} \,dt < \infty.
$$
Therefore, the BKM criterion assures that the solutions belonging to the family \eqref{solution a1} remain smooth during the whole interval of time $[0,T]$. In particular, they do not blow-up at time $T$ (even though the velocity field may exhibit a singularity at that time, depending on the value of $\gamma$).\\
\par
$\bullet$ \textbf{Case (B):} in cylindrical coordinates, the velocity and vorticity fields associated with the class of solutions \eqref{solution2} take the form:
\begin{equation} \label{solution b1}
\left\lbrace 
		\begin{aligned}
		 & \textbf{u}(\xi,t) = \kappa r (1 - r)^{\frac{\gamma - 2}{2\gamma}}\hat{\theta} + 2c(T-t)^{-1 + 2\gamma}\hat{k}
		 \\ & \omega(\xi,t) = \kappa\dfrac{2 - (\alpha + 2)r}{(1-r)^{\beta}}\hat{k}, 
		\end{aligned}  
\right.	
\end{equation}
for every $(\xi,t) \in \mathcal{C}(\delta,T)$, with $\kappa, c \in \mathbb{R}$, $\alpha \doteq \dfrac{1}{2} - \dfrac{1}{\gamma}$, $\beta \doteq \dfrac{1}{2} + \dfrac{1}{\gamma}$. Then, for $\kappa \neq 0$, the vorticity field is not identically null on $\mathcal{C}(\delta,T)$, but is governed by a stationary regime that depends only on the values of $r \in (1-\delta,1)$. Furthermore, for every $\kappa \neq 0$, we easily see that $\vert \omega_{z}(r) \vert \nearrow \infty$ as $r \nearrow 1$. In particular, we may deduce that:

$$ 
\Vert \omega(\cdot,t) \Vert_{L^{\infty}(\Phi; \mathbb{R}^{3})} = \sup_{1 - \delta < r < 1} \vert \omega _{z}(r) \vert = \infty, \ \ \forall t \in (T - \delta, T),
$$
and this clearly implies the following:
$$ 
\int\limits_{T-\delta}^{T-\delta + \varepsilon} \Vert \omega(\cdot,t) \Vert_{L^{\infty}(\Phi; \mathbb{R}^{3})} \,dt = \infty,
$$
for all $\varepsilon > 0$. Thus, according to the BKM criterion \eqref{bkm}, the solutions belonging to the family \eqref{solution b1} (with $\kappa \neq 0$) blow up at time $T - \delta + \varepsilon$, for every $\varepsilon > 0$ sufficiently small. Now, supposedly, the self-similar ansatz is valid in the entire region $\Lambda \times [T - \delta, T)$, but as we have seen, the turbulent regime is activated immediately at instant $T - \delta$, due to the stationary nature of the vorticity field. Therefore, the ansatz cannot be effective in the whole interval of time $[T - \delta, T)$. This is a clear discrepancy between the analytical properties of solution \eqref{solution b1} and the numerical evidence that was reported in \cite{luo}, but the main conclusion is this: functions belonging to family \eqref{solution b1} do not constitute solutions of the 3D axisymmetric Euler equations that are initially smooth (for some interval of time), but that develop a singularity at some later, finite time $T > 0$. On the contrary, such solutions are stationary and present a physical singularity at $r = 1$ that is independent of time and of the axial variable, even though the ansatz was designed to illustrate the development and evolution of a self-similar singularity occuring at the space-time point $(r,z,t) = (1,0,T)$.
\par
Another sign of inconsistency can be found at the end of section 4.7 (in \cite{luo}), where the authors outline the detection of the following scaling law for the axial component of the vorticity field (prior to the critical time $T$):
$$
\omega_{z} = \mathcal{O}(T - t)^{-2\text{.}45}.
$$
Certainly, such a scaling law cannot reproduce the behaviour of a stationary vorticity field.

\section{Final comments}
As mentioned earlier in the abstract, the family of solutions \eqref{solution1} was initially found by D. Chae and T.-P. Tsai in \cite{chae4}, under a rather unjustified decay condition of the blow-up profiles. Precisely, some calculations based on the uniform boundedness of the energy of the solution suggested them the following laws for the asymptotic behaviour of the self-similar profiles $(U,\Psi)$: 
\begin{equation} \label{gamma2}
\left\lbrace 
\begin{aligned}
& \vert U(R,Z) \vert = o(1), \ \ \text{for} \ \ 0 < \gamma < 2, \\
& \Vert \nabla \Psi (R,Z) \Vert = o(\Vert (R,Z) \Vert), \ \ \text{for} \ \gamma > 0,
\end{aligned}  
\right.	
\end{equation}
as $\Vert (R,Z) \Vert \longrightarrow \infty$, for both profiles. The hypothesis \eqref{gamma2} allowed them to derive the class of solutions \eqref{solution1} after a simple calculation, but at the same time, impeded the development of the second family of solutions \eqref{solution2}.  Therefore, the main novelty of our work lies in the obtainment of the general class of solutions \eqref{solution1} - \eqref{solution2} without assuming such asymptotic conditions on the self-similar profiles.
{\begin{center}
\textbf{Acknowledgements}
\end{center}
\par
This research was part of my thesis in mathematical engineering at the Universidad de Chile (Departamento de Ingenier\'{i}a Matem\'{a}tica), which was conducted under the supervision of professors Juan D\'{a}vila and Manuel del Pino (to whom I thank the many helpful comments), and supported partially by Fondecyt grant $1150066$.
\bibliographystyle{abbrv}
\bibliography{references}

\end{document}